\newcommand{\shortlong}[2]{#2}
\newcommand{\R}{\mathbb{R}}
\newcommand{\N}{\mathbb{N}}
\newcommand{\ra}{\rightarrow}
\newcommand{\eqdf}{:=}
\newcommand{\CP}{\mathbf{C}^*_\mathrm{P}}
\newcommand{\CPU}{\mathbf{C}^*_\mathrm{PU}}
\newcommand{\CCPU}{\mathbf{CC}^*_\mathrm{PU}}
\newcommand{\WNPU}{\mathbf{W}^*_\mathrm{NPU}}
\newcommand{\CPsU}{\mathbf{C}^*_\mathrm{PsU}}
\newcommand{\CcPU}{\mathbf{C}^*_\mathrm{CPU}}
\newcommand{\C}{\mathbb{C}}
\newcommand{\CMIU}{\mathbf{C}^*_\mathrm{MIU}}
\newcommand{\CCMIU}{\mathbf{CC}^*_\mathrm{MIU}}
\newcommand{\WNMIU}{\mathbf{W}^*_\mathrm{NMIU}}
\newcommand{\Kl}[1]{\mathcal{K}\ell(#1)}
\newcommand{\EM}[1]{\mathcal{EM}(#1)}
\newcommand{\Cat}[1]{\mathbf{#1}}
\newcommand{\id}[1]{\mathrm{id}_{#1}}
\newcommand{\Ran}[1]{\mathrm{Ran}#1}
\newcommand{\afterKl}{\mathrel{\varodot}}
\newcommand{\Mtwo}{\mathrm{M}_2}
\setlist[1]{label=(\roman*)}
\theoremstyle{definition}
\newtheorem{numbering}{Use to get sequential numbering}
\newtheorem{dfn}[numbering]{Definition}
\newtheorem{ex}[numbering]{Example}
\theoremstyle{remark}
\newtheorem{rem}[numbering]{Remark}
\theoremstyle{notation}
\newtheorem{nt}[numbering]{Notation}
\theoremstyle{theorem}
\newtheorem{lem}[numbering]{Lemma}
\newtheorem{cor}[numbering]{Corollary}
\newtheorem{thm}[numbering]{Theorem}
\newtheorem{prob}[numbering]{Problem}
\title{Quantum Programs as Kleisli Maps}
\author{Abraham Westerbaan
\institute{Radboud University Nijmegen}
\email{bram@westerbaan.name}}
\begin{document}

{ 
\maketitle
\begin{abstract}
        Furber and Jacobs
        have shown in their study of quantum computation
        that the category of commutative $C^*$-algebras
        and \emph{PU-maps} (positive linear maps which preserve the unit)
        is isomorphic to the Kleisli category
        of a comonad on the category of
        commutative $C^*$-algebras with
        \emph{MIU-maps} (linear maps which preserve multiplication,
        involution and unit).~\cite{furber2013}

        In this paper,
        we prove a non-commutative variant of this result:
        the category of $C^*$-algebras and PU-maps
        is isomorphic to the Kleisli category
        of a comonad on the subcategory
        of MIU-maps.

	A variation on this result
	has been used
	to construct a model of Selinger and Valiron's
	quantum lambda calculus using von Neumann 
	algebras.~\cite{CW2016}
\end{abstract}
 }

{         The semantics of a 
        non-deterministic program that takes two bits
        and returns three bits can
        be described as a multimap (= binary relation) from~$\{0,1\}^2$
        to~$\{0,1\}^3$.
        Similarly,
        a program
        that takes two qubits
        and returns three qubits
        can be modelled as a positive
        linear unit-preserving map
        from $\Mtwo\,\otimes\,\Mtwo\,\otimes\,\Mtwo$
        to~$\Mtwo\,\otimes\,\Mtwo$,
        where~$\Mtwo$ 
        is the $C^*$-algebra
        of  $2\times 2$-matrices over~$\C$.
        
        More generally,
        the category~$\Cat{Set}_\mathrm{multi}$
        of multimaps between sets
        models non-deterministic programs (running
        on an ordinary computer),
        while the opposite of the category~$\CPU$
        of \emph{PU-maps} (positive linear unit-preserving maps)
        between $C^*$-algebras 
        models programs running on a quantum computer.
        (When we write ``$C^*$-algebra''
        we always mean ``$C^*$-algebra with unit''.)

        A multimap from~$\{0,1\}^2$ to~$\{0,1\}^3$
        is simply a map from~$\{0,1\}^2$ to~$\mathcal{P}(\{0,1\}^3)$.
        In the same line~$\Cat{Set}_{\mathrm{multi}}$
        is (isomorphic to) the Kleisli category
        of the powerset monad~$\mathcal{P}$ on~$\Cat{Set}$.
        What about $\CPU$?

        We will show that there is a monad~$\Omega$
        on~$(\CMIU)^\mathrm{op}$, the opposite of the 
        category~$\CMIU$ of $C^*$-algebras and \emph{MIU-maps}
        (linear maps 
        that preserve the 
        multiplication, involution and unit),
        such that~$(\CPU)^\mathrm{op}$
        is isomorphic to the Kleisli category of~$\Omega$.
        We say that $(\CPU)^\mathrm{op}$ is \emph{Kleislian} 
        over~$(\CMIU)^\mathrm{op}$.
        So in the same way we add non-determinism 
        to~$\Cat{Set}$ by the powerset monad~$\mathcal{P}$
        yielding~$\Cat{Set}_\mathrm{multi}$,
        we can obtain $(\CPU)^\mathrm{op}$
        from~$(\CMIU)^\mathrm{op}$
        by a monad~$\Omega$.

        Let us spend some words on how we obtain
        this monad~$\Omega$.
        Note that since every positive element of a $C^*$-algebra~$\mathscr{A}$
        is of the form~$a^*a$ for some~$a\in\mathscr{A}$
        any MIU-map will be positive.
        Thus $\CMIU$ is a subcategory of~$\CPU$.
        Let $U\colon \CMIU\longrightarrow \CPU$ be the embedding.

        In Section~\ref{S:left-adjoint}
        we will prove that~$U$ has a left adjoint
        $F\colon \CPU\longrightarrow \CMIU$,
        see Theorem~\ref{thm:left-adjoint}.
        This adjunction gives us a comonad~$\Omega\eqdf FU$ on~$\CMIU$
        (which is a monad on~$(\CMIU)^\mathrm{op}$)
        with the same counit as the adjunction.
        The comultiplication~$\delta$ is given by 
        $\delta_{\mathscr{A}}= F\eta_{U\mathscr{A}}$ for
        every object~$\mathscr{A}$ from~$\CMIU$
        where~$\eta$ is the unit of the adjunction
        between~$F$ and~$U$.

        In Section~\ref{S:kleisli} we will prove that~$(\CPU)^\mathrm{op}$
        is isomorphic to~$\Kl{FU}$ if~$FU$ is considered a monad 
        on~$(\CMIU)^\mathrm{op}$.
        In fact, we will prove that
        the \emph{comparison functor}
        $L\colon \Kl{FU}\longrightarrow (\CPU)^\mathrm{op}$
        (which sends a MIU-map 
        $f\colon FU\mathscr{A}\longrightarrow \mathscr{B}$
        to 
        $Uf\circ \eta_{U\mathscr{A}}\colon 
        U\mathscr{A}\longrightarrow U\mathscr{B}$)
        is an isomorphism,
        see Corollary~\ref{C:kleisli}.

        The method used to show that~$(\CPU)^\mathrm{op}$
        is Kleislian over~$(\CMIU)^\mathrm{op}$ is quite general
        and it will be obvious that many
        variations
        on~$(\CPU)^\mathrm{op}$ will be Kleislian over~$(\CMIU)^\mathrm{op}$
        as well, such as the opposite of the category
        of subunital completely positive linear maps between $C^*$-algebras.
        The flip-side of this generality is that we discover preciously little 
        about the monad~$\Omega$
        which leaves room for future inquiry
        (see Section~\ref{S:discussion}).

	We will also see
	that the opposite~$(\mathbf{W}^*_\mathrm{NCPsU})^\mathrm{op}$ of 
	the category of
	normal completely positive
	subunital maps between von Neumann algebras
	is Kleislian over the 
	subcategory~$(\mathbf{W}^*_\mathrm{NMIU})^\mathrm{op}$
	of normal unital $*$-homomorphisms.
	This fact is used in~\cite{CW2016}
	to construct 
	an adequate model of 
	Selinger and Valiron's quantum lambda calculus
	using von Neumann algebras.

\section{The Left Adjoint}
\label{S:left-adjoint}

        In Theorem~\ref{thm:left-adjoint} we will show 
        that~$U$ has a left adjoint, $F\colon \CMIU\ra \CPU$,
        using a quite general method.
        As a result we do not get any ``concrete'' information
        about~$F$ in the sense that while we will learn that
        for every $C^*$-algebra $\mathscr{A}$
        there exists an arrow 
        $\varrho \colon\mathscr{A}\ra UF\mathscr{A}$
        which is initial from~$\mathscr{A}$ to~$U$
        we will learn nothing more about~$\varrho$ than this.
        Nevertheless, for some (very) basic $C^*$-algebras~$\mathscr{A}$
        we can describe~$F\mathscr{A}$ directly,
        as is shown below in Example~\ref{ex:C}--\ref{ex:C3}.
\begin{ex}
\label{ex:C}
        Let us start easy: $\C$ will be mapped to itself by~$F$, that is:\\
                \emph{the identity  $\varrho \colon \C \longrightarrow U\C$
                is an initial arrow from~$\C$ to~$U(-)$.}\\
        Indeed, 
        let $\mathscr{A}$ be a $C^*$-algebra
        and let $\sigma \colon \C\ra U\mathscr{A}$ be a PU-map.
        Then~$\sigma$ must be given by $\sigma(\lambda) = \lambda\cdot 1$
        for~$\lambda\in\C$, where~$1$ is the identity of~$\mathscr{A}$.
        Thus~$\sigma$ is a MIU-map as well.  Hence there is a unique
        MIU-map $\hat{\sigma}\colon \C \ra \mathscr{A}$ 
        (namely $\hat{\sigma}=\sigma$)
        such that $\hat{\sigma}\circ \varrho = \sigma$.
        ($\C$ is initial in both~$\CMIU$ and~$\CPU$.)
\end{ex}
\begin{ex}
        \label{ex:C2}
        The image of~$\C^2$ under~$F$ will be
        the $C^*$-algebra~$C[0,1]$ of continuous functions
        from~$[0,1]$ to~$\C$.
        As will become clear below,
        this is very much related to the familiar functional calculus
        for $C^*$-algebras:
        given an element~$a$ of a $C^*$-algebra~$\mathscr{A}$
        with $0\leq a\leq 1$
        and $f\in C[0,1]$
        we can make sense of~``$f(a)$'',
        as an element of~$\mathscr{A}$.\\
                \emph{The map $\varrho\colon \C^2 \longrightarrow UC[0,1]$
        given by, for $\lambda,\mu\in\C$, $x\in [0,1]$,
        \begin{equation*}
                \varrho(\lambda,\mu)(x)
                \ = \ 
                \lambda x \,+\, \mu(1-x)
        \end{equation*}
        is an initial arrow from~$\C^2$ to~$U$.}\\
        Let~$\sigma\colon \C^2 \ra U\mathscr{A}$ be a PU-map.
        We must show that there is a unique MIU-map 
        $\overline{\sigma}\colon C[0,1]\ra \mathscr{A}$
        such that $\sigma = \overline{\sigma}\circ\varrho$.

        Writing $a\eqdf \sigma(1,0)$, we have
                $\sigma(\lambda,\mu) =  \lambda a + \mu(1-a)$
        for all $\lambda,\mu\in \C$.
        Note that~$(0,0)\leq (1,0)\leq (1,1)$ and
        thus $0\leq a\leq 1$.
        Let $C^*(a)$ be the $C^*$-subalgebra of~$\mathscr{A}$
        generated by~$a$.
        Then $C^*(a)$ is commutative since~$a$ is positive (and thus normal).
        Given a MIU-map $\omega\colon C^*(a)\ra \C$
        we have $\omega(a)\in[0,1]$ since $0\leq a\leq 1$.
        Thus $\omega\mapsto \omega(a)$
        gives a map $j\colon \Sigma C^*(a)\ra [0,1]$,
        where $\Sigma C^*(a)$ is the spectrum of~$C^*(a)$,
        that is, $\Sigma C^*(a)$ is
        the set of MIU-maps from~$C^*(a)$ to~$\C$ with
        the topology of pointwise convergence.
        (By the way, the image of~$j$ is the spectrum of
        the \emph{element}~$a$.)
        The map~$j$ is continuous since the topology on~$\Sigma C^*(a)$ is 
        induced by the product topology on~$\C^{C^*(a)}$.
        Thus the assignment $h\mapsto h\circ j$
        gives a MIU-map $Cj\colon C[0,1]\rightarrow C\Sigma C^*(a)$.
        By Gelfand's representation theorem
        there is a MIU-isomorphism 
        \begin{equation*}
                \gamma\colon C^*(a)\longrightarrow C\Sigma C^*(a)
        \end{equation*}
        given by $\gamma(b)(\omega)=\omega(b)$ for all~$b\in C^*(a)$ 
        and~$\omega\in \Sigma C^*(a)$.
        Now, define
        \begin{equation*}
                \overline{\sigma}\eqdf \gamma^{-1} \circ C j\colon  \ 
                C[0,1] \longrightarrow \C^*(a) \hookrightarrow \mathscr{A}.
        \end{equation*}
        (In the language of the functional calculus,
        $\overline{\sigma}$ maps $f$ to~$f(a)$.)
        We claim that~$\overline{\sigma}\circ \varrho = \sigma$.
        It suffices to show that~$Cj\circ \varrho \equiv
        \gamma\circ \overline{\sigma}\circ \varrho
        =\gamma\circ\sigma$.
        Let $\lambda,\mu\in \C$ and $\omega\in \Sigma C^*(a)$ be given. 
        We have
        \begin{alignat*}{3}
                (Cj\circ\varrho)(\lambda,\mu)(\omega) 
                \ &=\ (Cj)(\varrho(\lambda,\mu))(\omega)  \\
                \ &=\ \varrho(\lambda,\mu)(\,j(\omega)\,) 
                &&\text{by def.~of~$Cj$} \\
                \ &=\ \lambda j(\omega) \,+\, \mu(1-j(\omega)) 
                &&\text{by def.~of~$\varrho$} \\
                \ &=\ \lambda \omega(a) \,+\, \mu(1-\omega(a)) 
                \qquad&&\text{by def.~of~$j$} \\
                \ &=\ \omega(\,\lambda a + \mu(1-a)\,) 
                &&\text{as $\omega$~is a MIU-map} \\
                \ &=\ \omega(\,\sigma(\lambda,\mu)\,) 
                &&\text{by choice of~$a$} \\
                \ &=\ \gamma( \,\sigma(\lambda,\mu)\, )(\omega).
                &&\text{by def.~of~$\gamma$} \\
                \ &=\ (\gamma\circ\sigma)(\lambda,\mu)(\omega).
        \end{alignat*}
        It remains to be shown that~$\overline{\sigma}$
        is the only MIU-map $\tau\colon C[0,1]\ra \mathscr{A}$
        such that $U\tau \circ \varrho = \sigma$.
        Let~$\tau$ be such a map; we prove that~$\tau=\overline{\sigma}$.
        By assumption $\tau$ and~$\overline{\sigma}$ agree
        on the elements $f\in C[0,1]$ of the form
        \begin{equation*}
                f(x)\ = \ \lambda x \,+\, \mu (1-x).
        \end{equation*}
        In particular, $\overline{\sigma}$ and~$\tau$
        agree on the map $h\colon[0,1]\ra \C$ given by $h(x)=x$.
        
        Now, since
         $\overline{\sigma}$ and~$\tau$ are MIU-maps
         and $h$ generates the $C^*$-algebra~$C[0,1]$ 
        (this is Weierstrass's theorem), 
        it follows that~$\overline{\sigma} = \tau$.
\end{ex}
\begin{ex}
        \label{ex:C3}
        The image of~$\C^3$ under~$F$ will not be commutative,
        or more formally:\\
                \emph{If $\varrho\colon \C^3\longrightarrow U\mathscr{B}$
        is an initial map from~$\C^3$ to~$U$,
        then~$\mathscr{B}$ is not commutative.}\\
        Suppose that~$\mathscr{B}$ is commutative towards contradiction.
        Let~$\mathscr{A}$ be a $C^*$-algebra
        in which there are positive $a_1$, $a_2$, $a_3$
        such that $a_1a_2 \neq a_2 a_1$ and $a_1 + a_2 + a_3 = 1$.

        (For example,
        we can take~$\mathscr{A}$ to be the set of linear operators on~$\C^2$
        and let
        \begin{align*}
                a_1 \ &\eqdf \ \nicefrac{1}{2} \,P_1 &
                a_2 \ &\eqdf \ \nicefrac{1}{2} \,P_+ &
                a_3 \ &\eqdf\  I \,-\, \nicefrac{1}{2} \,P_1 \,-\, 
                           \nicefrac{1}{2}\, P_+
        \end{align*}
        where $P_1$ denotes the orthogonal projection 
        onto $\{\,(0,x)\colon x\in \C\,\}$
        and $P_+$ is the orthogonal projection 
        onto $\{\,(x,x)\colon x\in\C\,\}$.)

        Define $f\colon \C^3 \ra \mathscr{A}$ by,
        for all~$\lambda_1,\lambda_2,\lambda_3\in \C$,
        \begin{equation*}
                f(\lambda_1,\lambda_2,\lambda_3)
                \ = \ 
                \lambda_1a_1 \,+\,
                \lambda_2a_2 \,+\,
                \lambda_3a_3.
        \end{equation*}
        Then it is not hard to see that~$f$ a PU-map.
        So as~$\mathscr{B}$ is the initial 
        arrow from~$\C^3$ to~$U$ there is a (unique) MIU-map 
        $\overline{f}\colon \mathscr{B}\ra \mathscr{A}$
        such that $\overline{f}\circ \varrho = f$.
        We have
        \begin{alignat*}{3}
                a_1\cdot a_2 
                \ &=\ f(1,0,0) \cdot f(0,1,0) \\
                 &=\ \overline{f}(\varrho(1,0,0))
                \,\cdot\, \overline{f}(\varrho(0,1,0)) \\
                 &=\ \overline{f}(\, \varrho(1,0,0)\,\cdot\,
                        \varrho(0,1,0)\,) \\
                 &=\ \overline{f}(\, \varrho(0,1,0)\,\cdot\,
                        \varrho(1,0,0)\,)
                        \qquad&&\text{because $\mathscr{B}$ is commutative}\\
                              &=\ \overline{f}( \varrho(0,1,0))\,\cdot\,
                        \overline{f}(\varrho(1,0,0))\\
                &=\ a_2 \cdot a_1.
        \end{alignat*}
        This contradicts $a_1 \cdot a_2 \neq  a_2 \cdot a_1$.
        Hence~$\mathscr{B}$ is not commutative.
\end{ex}
\begin{rem}
        Before we prove
        that the embedding
        $\CMIU\ra \CPU$
        has a left adjoint~$F$
        (see Theorem~\ref{thm:left-adjoint})
        let us compare
        what we already know about~$F$ with the commutative case.
        Let~$\CCMIU$ denote
        the category of MIU-maps between commutative $C^*$-algebras
        and let~$\CCPU$
        denote the category of PU-maps between commutative
        $C^*$-algebras.
        From the work in~\cite{furber2013}
        it follows that the embedding~$\CCMIU\longrightarrow \CCPU$
        has a left adjoint~$F'$
        and moreover
        that $F'\mathscr{A} = C \mathrm{Stat} \mathscr{A}$,
        where $\mathrm{Stat}\mathscr{A}$
        is the topological
        space of PU-maps from~$\mathscr{A}$ to~$\C$
        with pointwise convergence
        and $C\mathrm{Stat}\mathscr{A}$
        is the $C^*$-algebra of continuous
        functions from~$\mathrm{Stat}\mathscr{A}$
        to~$\C$.

        Let $x\in [0,1]$.
        Then the assignment $(\lambda,\mu)\mapsto x\lambda + (1-x)\mu$
        gives a PU-map $\overline{x}\colon \C^2 \ra \C$.
        It is not hard to see that $x\mapsto \overline{x}$
        gives an isomorphism from~$[0,1]$ to~$\mathrm{Stat}\C^2$.
        Thus $F'\C^2 \cong C[0,1]$.
        Hence on~$\C^2$
        the functor~$F$ and its commutative variant~$F'$ agree
        (see Example~\ref{ex:C2}).
        However,
        on~$\C^3$ the functors~$F$ and~$F'$ differ.
        Indeed, $F'\C^3$ is commutative
        while $F\C^3$ is not (see Example~\ref{ex:C3}).
        \begin{equation*}
                \xymatrix{
        \CCMIU
        \ar[d]
        \ar@/_1em/[r]
        \ar@{}[r]|{\rotatebox[origin=c]{90}{$\vdash$}}
        &
        \CCPU
        \ar[d]
        \ar@/_1em/[l]_{F'}
        \\
        \CMIU
        \ar@/_1em/[r]
        \ar@{}[r]|{\rotatebox[origin=c]{90}{$\vdash$}}
        &
        \CPU
        \ar@/_1em/[l]_{F}
        }
        \end{equation*}
        Roughly summarised:
        while
        in the diagram above
        the right adjoints commute with
        the vertical embeddings,
        the left adjoints do not.
\end{rem}
\begin{thm}
        \label{thm:left-adjoint}
        The embedding $U\colon \CMIU \longrightarrow \CPU$
        has a left adjoint.
\end{thm}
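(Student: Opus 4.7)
The plan is to apply Freyd's General Adjoint Functor Theorem to $U\colon \CMIU\longrightarrow \CPU$. To do so I would verify three things: that $\CMIU$ is small-complete, that $U$ preserves small limits, and that $U$ satisfies the solution set condition at every object $\mathscr{A}$ of $\CPU$.

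For completeness of $\CMIU$, the construction is the usual ``set-theoretic'' one. The product of a family $(\mathscr{A}_i)_{i\in I}$ of $C^*$-algebras is the $\ell^\infty$-direct product $\{\,(a_i)_i\in\prod_i \mathscr{A}_i : \sup_i \|a_i\|<\infty\,\}$ with componentwise algebraic operations and supremum norm, the projections being MIU. The equalizer of MIU-maps $f,g\colon \mathscr{A}\rightarrow \mathscr{B}$ is the closed unital $C^*$-subalgebra $\{\,a\in\mathscr{A}:f(a)=g(a)\,\}$ with its inclusion. The same objects and arrows serve as limits in $\CPU$: a PU-map into the product is precisely a compatible family of PU-maps into each factor (contractivity of PU-maps makes the uniform norm bound automatic), and a PU-map coequalizing $f$ and $g$ lands set-theoretically in their equalizer. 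Hence $U$ preserves small limits.

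The main content is the solution set condition. Given an arbitrary PU-map $\sigma\colon \mathscr{A}\longrightarrow U\mathscr{C}$, I would let $\mathscr{B}_\sigma\subseteq \mathscr{C}$ denote the $C^*$-subalgebra generated by $\sigma(\mathscr{A})$; writing $\iota\colon \mathscr{B}_\sigma\hookrightarrow \mathscr{C}$ for the MIU inclusion, the map $\sigma$ corestricts to a PU-map $\sigma^{\flat}\colon \mathscr{A}\longrightarrow U\mathscr{B}_\sigma$ satisfying $\sigma = U(\iota)\circ \sigma^{\flat}$. A routine cardinality estimate then bounds $|\mathscr{B}_\sigma|$ by a cardinal $\kappa$ depending only on $|\mathscr{A}|$: the unital $*$-algebra over $\C$ generated by $\sigma(\mathscr{A})$ has cardinality at most $\max(|\mathscr{A}|,2^{\aleph_0})$, and passage to the norm closure raises this by at most an $\aleph_0$-th power. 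A set $\mathcal{S}$ of representatives for the isomorphism classes of $C^*$-algebras of cardinality at most $\kappa$ together with, for each $\mathscr{B}\in\mathcal{S}$, the (set of) PU-maps from $\mathscr{A}$ to $U\mathscr{B}$ then furnishes the required solution set, and Freyd's theorem delivers the left adjoint.

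The step I expect to require the most care is the cardinality bookkeeping in the solution set: one must check both that the bound $\kappa$ really is uniform in $\mathscr{C}$, and that ``$C^*$-algebras of cardinality at most $\kappa$'' do form a set up to isomorphism. The remaining ingredients --- construction of products and equalizers, their preservation by $U$, and the factorisation through the generated subalgebra --- are all routine once one commits to the AFT strategy.
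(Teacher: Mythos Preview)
Your proposal is correct and follows essentially the same route as the paper: Freyd's Adjoint Functor Theorem, with the $\ell^\infty$-direct product and the equaliser subalgebra serving as limits in both categories, and the solution set obtained by factoring any PU-map through the $C^*$-subalgebra generated by its image together with the cardinality bound $\#\mathscr{B}_\sigma\le \#(\mathscr{A}^{\mathbb N})$. The only cosmetic differences are that the paper uses the cruder estimate $\|f(c)\|\le 4\|c\|$ (rather than contractivity of PU-maps) to tuple PU-maps into the product, and realises the solution set as $C^*$-algebras on subsets of a fixed cardinal~$\kappa$ rather than as a set of isomorphism-class representatives.
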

\begin{proof}
        By Freyd's Adjoint Functor Theorem
        (see Theorem~V.6.1 of~\cite{maclane1998})
        and the fact that all limits can be formed using only products
        and equalisers 
        (see Theorem~V.2.1 and Exercise~V.4.2 of~\cite{maclane1998})
        it suffices to prove the following.
        \begin{enumerate}
                \item
                        \label{freyd-cond:i}
                        The category $\CMIU$ has all small products 
                        and equalisers.
                \item
                        \label{freyd-cond:ii}
                        The functor $U\colon \CMIU\longrightarrow \CPU$
                        preserves small products and equalisers.
                \item
                        \label{freyd-cond:iii}
                        \emph{Solution Set Condition.}\ 
                        For every $C^*$-algebra~$\mathscr{A}$
                        there is a set~$I$
                        and for each~$i\in I$
                        a PU-map $f_i\colon \mathscr{A}\ra 
                        \mathscr{A}_i$
                        such that for any PU-map $f\colon \mathscr{A}\ra
                        \mathscr{B}$
                        there is an~$i\in I$
                        and a
                        MIU-map $h\colon \mathscr{A}_i\ra\mathscr{B}$
                        such that $h\circ f_i = f$.
        \end{enumerate}
        Conditions~\ref{freyd-cond:i}
        and~\ref{freyd-cond:ii} can be verified with routine
        so we will spend only a few words on them (and leave
        the details to the reader).
        To see that Condition~\ref{freyd-cond:iii} holds
        requires
        a little more ingenuity and so we will
        give the proof in detail.
        \vspace{.4em}

        \noindent
        \emph{(Conditions~\ref{freyd-cond:i} and~\ref{freyd-cond:ii})}\ 
        Let us first think about small products
        in~$\CMIU$ and~$\CPU$.

        Let $I$ be a set, 
        and for each~$i\in I$ let $\mathscr{A}_i$ be a $C^*$-algebra.

        It is not
        hard to see that cartesian product $\prod_{i\in I}\mathscr{A}_i$
        is a $*$-algebra when endowed with coordinate-wise operations
        (and it is in fact the product of the $\mathscr{A}_i$
        in the category of $*$-algebras
        with MIU-maps, and with PU-maps).

        However,
        $\prod_{i\in I}\mathscr{A}_i$
        cannot be the product of the~$\mathscr{A}_i$
        as $C^*$-algebras:
        there is not even a $C^*$-norm
        on $\prod_{i\in I}\mathscr{A}_i$
        unless~$\mathscr{A}_i$ is trivial for 
        all but finitely many $i\in I$.
        Indeed, if $\|-\|$ were a $C^*$-norm on~$\prod_{i\in I}\mathscr{A}_i$,
        then we must have $\|\sigma(i)\|\leq \|\sigma\|$
        for all~$\sigma\in \prod_{i\in I}\mathscr{A}_i$
        and~$i\in I$,
        and 
        so for any sequence $i_0,\,i_1,\,\dotsc $
        of distinct elements of~$I$
        for which~$\mathscr{A}_{i_0},\,\mathscr{A}_{i_1},\,\dotsc$
        are non-trivial,
        and for every $\sigma\in \prod_{i\in I} \mathscr{A}_i$
        with $\sigma(i_n)=n\cdot 1$ for all~$n$,
        we have $n = \|\sigma(i_n)\|\leq \|\sigma\|$ for all~$n$,
        so $\|\sigma\|=\infty$,
        which is not allowed.

        Nevertheless,
        the $*$-subalgebra
        of~$\prod_{i\in I}\mathscr{A}_i$
        given by
        \begin{equation*}
                \textstyle{\bigoplus_{i\in I}\mathscr{A}_i
                \ \eqdf\ 
                \{\ \sigma\in \prod_{i \in I} \mathscr{A}_i\colon\ 
        \sup_{i\in I}\|\sigma(i)\| \,<\,+\infty\ \}}
        \end{equation*}
        is a $C^*$-algebra with norm given by,
        for $\sigma\in\bigoplus_{i\in I}\mathscr{A}_i$,
        \begin{equation*}
                \| \sigma \|\ =\ \textstyle{\sup_{i\in I}\|\sigma(i)\|}.
        \end{equation*}
        We claim that $\bigoplus_{i\in I}\mathscr{A}_i$
        is the product of the~$\mathscr{A}_i$
        in~$\CPU$ (and in~$\CMIU$).
        
        Let $\mathscr{C}$ be a $C^*$-algebra,
        and for each~$i\in I$,
        let $f_i\colon \mathscr{C}\ra \mathscr{A}_i$
        be a PU-map.
        We must show that there is a unique PU-map
        $f\colon \mathscr{C}\ra \bigoplus_{i\in I}\mathscr{A}_i$
        such that $\pi_i\circ f = f_i$ for all~$i\in I$
        where $\pi_i\colon \bigoplus_{j\in I} \mathscr{A}_j
        \ra \mathscr{A}_i$
        is the $i$-th projection.
        It is clear that there is at most one such~$f$,
        and it would satisfy
        for all~$i\in I$, and~$c\in\mathscr{C}$,
        $f(c)(i)\ = \ f_i(c)$.

        To see that such map~$f$ exists is easy if
        we are able to prove that,
        for all~$c\in\mathscr{C}$,
        \begin{equation}
                \label{eq:freyd-sup}
                \textstyle{\sup_{i\in I}} \|f_i(c)\|\ <\ +\infty.
        \end{equation}
        Let $i\in I$ be given.
        We claim that that~$\|f_i(c)\|\leq \|c\|$
        for any \emph{positive} $c \in\mathscr{C}$.
        Indeed,
        we have $c\leq \|c\|\cdot 1$,
        and thus
        $f_i(c) \leq 
        \|c\|\cdot f(1)=
        \|c\|\cdot 1$, 
        and so $\|f_i(c)\|\leq \|c\|$.
        It follows that $\|f_i(c)\| \leq 4 \cdot\|c\|$
        for any~$c\in \mathscr{A}$
        by writing $c = c_1 - c_2 + ic_3 - ic_4$
        where $c_1,\,c_2,\,c_3,\,c_4\in\mathscr{C}$
        are all positive.  (We even have
        $\|f(c)\|\leq \|c\|$ for all~$c\in\mathscr{C}$,
        but this requires a bit more effort%
        \footnote{See Corollary~1 of~\cite{russo1966}.}) Thus, we have
        $\sup_{i\in I} \|f_i(c)\| \leq 4\|c\| < + \infty$.
        Hence Statement~\eqref{eq:freyd-sup} holds.

        Thus $\bigoplus_{i\in I}\mathscr{A}_i$
        is the product of the~$\mathscr{A}_i$ in~$\CPU$.
        It is easy to see that $\bigoplus_{i\in I}\mathscr{A}_i$
        is the product of the~$\mathscr{A}_i$
        in~$\CMIU$ as well.
        Hence~$\CMIU$ has all small products (as does~$\CPU$)
        and~$U\colon \CMIU\longrightarrow \CPU$
        preserves small products.

        \vspace{.4em}
        Let us think about equalisers in~$\CMIU$
        and~$\CPU$.
        Let $\mathscr{A}$ and~$\mathscr{B}$ be $C^*$-algebras
        and let $f,g\colon \mathscr{A}\rightarrow \mathscr{B}$
        be MIU-maps.
        We must prove that~$f$ and~$g$ have an 
        equaliser~$e\colon \mathscr{E}\ra \mathscr{A}$
        in~$\CMIU$, and that~$e$ is the equaliser of~$f$ and~$g$
        in~$\CPU$ as well.

        Since $f$ and~$g$ are MIU-maps
        (and hence continuous),
        it is not hard to see that
        \begin{equation*}
                \mathscr{E} \ \eqdf\ 
                \{\ a\in\mathscr{A}\colon\ f(a)\,=\,g(a)\ \}
        \end{equation*}
        is a $C^*$-subalgebra of~$\mathscr{A}$.
        We claim that the inclusion $e\colon \mathscr{E}\ra \mathscr{A}$
        is the equaliser of~$f,g$ in~$\CPU$.
        Let~$\mathscr{D}$ be a $C^*$-algebra
        and let $d\colon \mathscr{D}\ra \mathscr{A}$
        be a PU-map such that~$f\circ d = g\circ d$.
        We must show that there is a unique PU-map 
        $h\colon \mathscr{D}\ra \mathscr{E}$
        such that~$d = e\circ h$.
        Note that~$d$ maps~$\mathscr{A}$ into~$\mathscr{E}$.
        The map~$h\colon \mathscr{D}\ra \mathscr{E}$ is simply 
        the restriction of~$d\colon \mathscr{D}\ra \mathscr{A}$
        in the codomain. 
        Hence~$e$ is the equaliser of~$f,g$ in~$\CPU$.
        
        Note that in the argument above~$h$ is a PU-map
        since~$d$ is a PU-map.
        If~$d$ were a MIU-map,
        then~$h$ would be a MIU-map too.
        Hence~$e$ is the equaliser of~$f,g$ in the category~$\CMIU$
        as well.

        Hence $\CMIU$ has all equalisers
        and~$U\colon \CMIU\longrightarrow \CPU$ preserves equalisers.
        Hence~$\CMIU$ has all small limits
        and~$U\colon \CMIU\longrightarrow \CPU$ preserves all small limits.

        (Note that while we have seen that~$\CPU$
        has all small products,
        and it was easy to see that~$\CMIU$
        has all equalisers,
        it is not clear whether~$\CPU$
        has all equalisers.
        Indeed, if $f,g\colon \mathscr{A}\ra \mathscr{B}$
        are PU-maps,
        then the set $\{ a\in \mathscr{A}\colon f(a)=g(a)\}$
        need not be a $C^*$-subalgebra of~$\mathscr{A}$.)
        \vspace{.4em}

        \noindent
        \emph{(Condition~\ref{freyd-cond:iii}).}\ 
        Let~$\mathscr{A}$ be a $C^*$-algebra.
        We must find a set~$I$
        and for each~$i\in I$ a PU-map 
        $f_i\colon \mathscr{A}\ra \mathscr{A}_i$
        such that for every PU-map
        $f\colon \mathscr{A}\ra \mathscr{B}$
        there is a (not necessarily unique) 
        $i\in I$ and $h\colon \mathscr{A}_i\ra \mathscr{B}$
        such that $f=h\circ f_i$.

        Note that if $f\colon \mathscr{A}\ra \mathscr{B}$
        is a PU-map,
        then the range of the PU-map~$f$ need not be a $C^*$-subalgebra
        of~$\mathscr{B}$.
        (If the range of PU-maps would have been $C^*$-algebras,
        then we could have taken $I$ to be the set of all ideals 
        of~$\mathscr{A}$,
        and $f_J\colon \mathscr{A}\ra \mathscr{A}/J$
        to be the quotient map for any ideal~$J$ of~$\mathscr{A}$.)

        Nevertheless,
        given a PU-map $f\colon \mathscr{A}\ra \mathscr{B}$
        there is a smallest $C^*$-subalgebra, say $\mathscr{B}'$,
        of~$\mathscr{B}$ that contains the range of~$f$.
        \textbf{We claim that~$\#\mathscr{B}' \leq \#(\mathscr{A}^\N)$}
        where 
        $\#\mathscr{B}'$ is the cardinality of~$\mathscr{B}'$
        and 
        $\#(\mathscr{A}^\N)$ is the cardinality of~$\mathscr{A}^\N$.\footnote{%
                Although it has no bearing on the validity of the proof
                one might wonder if the simpler statement 
                $\# \mathscr{B}'\leq \#\mathscr{A}$ holds as well.
                Indeed, if $\#\mathscr{A} = \#\C$ or
                $\#\mathscr{A}=\#(2^X)$ for some infinite set~$X$,
                then we have $\#\mathscr{A} = \#(\mathscr{A}^\N)$,
                and so 
                $\# \mathscr{B}'\leq \#\mathscr{A}$.
                However,
                not every uncountable set is of the form
                $2^X$ for some infinite set~$X$,
                and in fact,
                if $\#\mathscr{A} = \aleph_\omega$,
                then 
                $\#(\mathscr{A}^\N) > \#\mathscr{A}$
                by Corollary~3.9.6 of~\cite{devlin1993}}

        If we can find proof for our claim,
        the rest is easy.
        Indeed, to begin
        note that the collection
        of all $C^*$-algebras is not a small set.
        However, given a set~$U$, 
        the collection of all $C^*$-algebras~$\mathscr{C}$
        whose elements come from~$U$ (so $\mathscr{C}\subseteq U$)
        is a small set.
        Now,
        let~$\kappa\eqdf \# (\mathscr{A}^\N)$
        be the cardinality of~$\mathscr{A}^\N$
        (so~$\kappa$ is itself a set)
        and take
        \begin{equation*}
                I \ \eqdf\ 
                \{\ (\mathscr{C},c)\colon\ 
                        \text{$\mathscr{C}$ is a $C^*$-algebra
                        on a subset of~$\kappa$
                        and $c\colon \mathscr{A}\ra \mathscr{C}$
        is a PU-map} \ \}.
        \end{equation*}
        Since the collection of $C^*$-algebras $\mathscr{C}$
        with $\mathscr{C}\subseteq \kappa$ is small,
        and  since the collection of PU-maps from~$\mathscr{A}$
        to~$\mathscr{C}$ is small for any $C^*$-algebra~$\mathscr{C}$,
        it follows that~$I$ is small.

        For each $i\in I$ with  $i\equiv(\mathscr{C},c)$
        define $\mathscr{A}_i\eqdf \mathscr{C}$
        and $f_i \eqdf c$.

        Let $f\colon \mathscr{A}\ra \mathscr{B}$
        be a PU-map.
        We must find $i\in I$ and a MIU-map 
        $h\colon \mathscr{A}_i\ra \mathscr{B}$
        such that $h\circ f_i = f$.
        Let~$\mathscr{B}'$ be the smallest $C^*$-subalgebra
        that contains the range of~$f$.
        By our claim we have 
        $\# \mathscr{B}' \leq \#(\mathscr{A}^\N) \equiv\kappa$.
        By renaming the elements of~$\mathscr{B}'$
        we can find a $C^*$-algebra~$\mathscr{C}$
        isomorphic to~$\mathscr{B}'$
        whose elements come from~$\kappa$.
        Let $\varphi\colon \mathscr{C}\ra\mathscr{B}'$
        be the isomorphism.

        Note that $c\eqdf \varphi^{-1}\circ f\colon \mathscr{A}\ra \mathscr{C}$
        is a PU-map.
        So we have $i\eqdf (\mathscr{C},c)\in I$.
        Further, the inclusion 
        $e\colon \mathscr{B}'\rightarrow \mathscr{B}$
        is a MIU-map,
        as is~$\varphi$. So we have:
        \begin{equation*}
        \xymatrix{
                \mathscr{A}
                \ar[rr]^f_{\mathrm{PU}}
                \ar[d]_{c}^{\mathrm{PU}}
                &&
                \mathscr{B}
                \\
                \mathscr{C}
                \ar[rr]_\varphi^{\mathrm{MIU}}
                &&
                \mathscr{B}'
                \ar[u]_{e}^{\mathrm{MIU}}
        }
        \end{equation*}
        Now, $h\eqdf e\circ \varphi\colon \mathscr{C}\ra \mathscr{B}$ is 
        a MIU-map with $f = h\circ f_i$.
        Hence Cond.~\ref{freyd-cond:iii} holds.
        \vspace{.4em}

        Let us proof our claim.
        Let $\mathscr{A}$ and~$\mathscr{B}$
        be $C^*$-algebras
        and let $f\colon \mathscr{A}\ra \mathscr{B}$
        be a PU-map.
        Let~$\mathscr{B}'$ be the smallest $C^*$-subalgebra
        that contains the range of~$f$.\\
        We must show that $\#\mathscr{B}'\leq \#(\mathscr{A}^\N)$.
        
        Let us first take  care of pathological case.
        Note that if $\mathscr{A}$ is trivial,
        i.e.~$\mathscr{A}=\{0\}$,
        then~$\mathscr{B}' =\{0\}$,
        so $ \#(\mathscr{A}^\N)= 1 =  \#\mathscr{B}'$.
        Now,
        let us assume that~$\mathscr{A}$ is not trivial.
        Then we have an injection $\C\ra \mathscr{A}$
        given by $\lambda\mapsto \lambda \cdot 1$,
        and thus $\#\C\leq\#\mathscr{A}$.

        The trick to prove~$\#\mathscr{B}'\leq \#(\mathscr{A}^\N)$
        is to find a more explicit description of~$\mathscr{B}'$.
        Let~$T$ be the set of terms
        formed using a unary operation~$(-)^*$ 
        (involution)
        and two binary operations, ~$\cdot$ (multiplication)
        and $+$ (addition), 
        starting from the elements of~$\mathscr{A}$.
        Let $f_T\colon T\longrightarrow \mathscr{B}'$
        be the map (recursively) given by,
        for $a\in \mathscr{A}$, and $s,t\in T$,
        \begin{alignat*}{3}
                f_T(a)\ &=\ f(a)\\
                f_T(s^*) \ &=\ (f_T(s))^* \\
                f_T(s\cdot t) \ &= \ f_T(s)\,\cdot\, f_T(t)\\
                f_T(s+t)\ &=\ f_T(s)\,+\,f_T(t).
        \end{alignat*}
        Note that the range of~$f_B$,
        let us call it~$\Ran{f_B}$,
        is a $*$-subalgebra of~$\mathscr{B}'$.
        We will prove that~$\#\Ran{f_B} \leq \#\mathscr{A}$.
        Since~$f_B$ is a surjection of~$T$ onto~$\Ran{f_B}$
        it suffices to prove that~$\#T \leq \# \mathscr{A}$.
        In fact,
        we will show that~$\#T = \#\mathscr{A}$.

        First note that~$\mathscr{A}$ is infinite,
        and $\mathscr{A}\subseteq T$,
        so~$T$ is infinite as well.
        To prove that $\#T = \#\mathscr{A}$
        we write the elements of~$T$ 
        as words (with the use of brackets).
        Indeed,
        with $Q\eqdf \mathscr{A}\cup \{ 
        \text{``}\cdot\text{''},
        \text{``}+\text{''},
        \text{``}*\text{''},
        \text{``})\text{''},
        \text{``}(\text{''} \}$
        there is an obvious injection from~$T$ into
        the set~$Q^*$ of words over~$Q$.
        Since~$\mathscr{A}$ is infinite,
        and~$Q \backslash\mathscr{A}$ is finite
        we have $\#Q = \#\mathscr{A}$
        by Hilbert's hotel.
        Recall that $Q^* = \bigcup_{n=0}^\infty Q^n$.
        Since~$Q$ is infinite,
        we also have $\#(\N\times Q)=\#Q$
        and even  $\#(Q\times Q)=\#Q$
        (see Theorem~3.7.7 of~\cite{devlin1993}),
        so $\# Q = \# (Q^n)$ for all~$n>0$.
        It follows that 
        \begin{alignat*}{3}
                \# (Q^*) 
                \ &= \ 
                \#(\, \textstyle{\bigcup_{n=0}^\infty} Q^n\, ) \\
                \ &= \ 
                \#(\, 1+ \textstyle{\bigcup_{n=1}^\infty} Q\, ) \\
                \ &= \ 
                \#(\, 1+ \N\times Q\, ) \\
                \ &= \ 
                \#Q.
        \end{alignat*}
        Since there is an injection from~$T$ to~$Q^*$ we have
        $\# \mathscr{A} \leq \# T \leq \# (Q^*) = \# Q =\# \mathscr{A}$
        and so $\#T = \#\mathscr{A}$.
        Hence $\#\Ran{f_B} \leq \#\mathscr{A}$.

        Since~$\Ran{f_B}$
        is a $*$-algebra that contains~$\Ran{f}$,
        the closure $\smash{\overline{\Ran{f_B}}}$
        of~$\Ran{f_B}$ with respect to the norm on~$\mathscr{B}'$
        is a $C^*$-algebra that contains~$\Ran{f}$.
        As~$\mathscr{B}'$ is the smallest $C^*$-subalgebra
        that contains~$\Ran{f}$,
        we see that~$\mathscr{B}'=\smash{\overline{\Ran{f_B}}}$.
        
        Let $S$ be the set of all Cauchy sequences
        in~$\Ran{f_B}$.
        As every point in~$\mathscr{B}'$
        is the limit of a Cauchy sequence in~$\Ran{f_B}$,
        we get $\# \mathscr{B}' \leq \# S$.
        Thus:
        \begin{alignat*}{3}
                \#\mathscr{B'} 
                \ &\leq\ 
                \# S &&\\
                \ &\leq\ 
                \# \,(\Ran{f_B})^\N \qquad
                && \text{as $S\subseteq (\Ran{f_B})^\N$} \\
                \ &\leq\ 
                \#(\, \mathscr{A}^\N\,)\qquad
                &&\text{as $\# \Ran{f_B}\leq \#\mathscr{A}$}.
        \end{alignat*}
        Thus we have proven our claim.
        
        Hence Conditions~\ref{freyd-cond:i}--\ref{freyd-cond:iii} hold
        and $U\colon \CMIU \longrightarrow \CPU$
        has a left adjoint.
\end{proof}

We have seen that $U\colon \CMIU \longrightarrow \CPU$
has a left adjoint~$F\colon \CPU\longrightarrow \CMIU$.
This adjunction gives a comonad~$FU$ on~$\CMIU$,
which in turns gives us two categories:
the Eilenberg--Moore category $\EM{FU}$ of~$FU$-coalgebras
and the Kleisli category~$\Kl{FU}$. 
We claim that~$\CPU$ is isomorphic to~$\Kl{FU}$
since~$\CMIU$ is a subcategory of~$\CPU$ with the same objects.

This is a special case of a more general phenomenon
which we discuss 
in the next section
(in terms of monads instead of comonads), 
see Theorem~\ref{thm:kleisli-beck}.

\section{Kleislian Adjunctions}
\label{S:kleisli}
Beck's Theorem (see \cite{maclane1998}, VI.7) gives a criterion for when
an adjunction $F\dashv U$ ``is'' an adjunction between $\Cat{C}$
and~$\EM{UF}$.
We give a similar (but easier) criterion
for when an adjunction ``is''
an adjunction between $\Cat{C}$ and~$\Kl{UF}$.
The criterion is not new;
e.g., it is mentioned in~\cite{lack2010} (paragraph 8.6)
without proof or reference,
and it can be seen as a consequence of Exercise~VI.5.2 of~\cite{maclane1998}
(if one realises
that an equivalence which is bijective on objects is
an isomorphism).
Proofs can be found in the appendix.
\begin{nt}
        \label{nt:beck}
        Let 
        $F\colon \Cat{C}\longrightarrow \Cat{D}$
        be a functor with right adjoint~$U$.
        Denote the unit of the adjunction
        by $\eta\colon \id{\Cat{D}} \ra UF$,
        and the counit by $\varepsilon\colon FU\ra \id{\Cat{C}}$.

        Recall that~$UF$ is a monad
        with unit~$\eta$
        and as multiplication, for $C$ from~$\Cat{C}$,
        \begin{equation*}
                \mu_C \ \eqdf\  U\varepsilon_{FC} 
        \colon \ UFUFC\longrightarrow UFC.
        \end{equation*}

        Let~$\Kl{UF}$ be the Kleisli category of the monad~$UF$.
        So~$\Kl{UF}$
        has the same objects as~$\Cat{C}$,
        and the morphisms in~$\Kl{UF}$ from~$C_1$ to~$C_2$
        are the morphism in~$\Cat{C}$ from~$C_1$ to~$UFC_2$.
        Given~$C$ from~$\Cat{C}$
        the identity in~$\Kl{UF}$ on~$C$ is~$\eta_C$.
        If $C_1,C_2,C_3$, $f\colon C_1 \ra C_2$,
         $g\colon C_2 \ra C_3$
        from~$\Cat{C}$
        are given,
        $g$ after~$f$ in~$\Kl{UF}$ is
        \begin{equation*}
                g \afterKl f
        \ \eqdf\ 
        \mu_{C_3} \circ UFg \circ f. 
        \end{equation*}

        Let $V\colon \Cat{C}\longrightarrow \Kl{UF}$
        be given by,
        for $f\colon C_1\longrightarrow C_2$
        from~$\Cat{C}$,
        \begin{equation*}
        Vf \ \eqdf \ \eta_{C_2} \circ f\colon \quad
        C_1 \longrightarrow UFC_2.
        \end{equation*}

        Let $G\colon \Kl{UF}\longrightarrow \Cat{C}$
        be given by,
        for
        $f\colon C_1\longrightarrow UFC_2$
        from~$\Cat{C}$,
        \begin{equation*}
        Gf \ \eqdf \ \mu_{C_2} \circ UF f\colon \quad
        UFC_1 \longrightarrow UFC_2.
        \end{equation*}
\end{nt}

\noindent 
The following is  Exercise VI.5.1 of~\cite{maclane1998}.
\begin{lem}
        \label{lem:L}
        Let $F\colon \Cat{C}\longrightarrow \Cat{D}$
        be a functor with a right adjoint~$U$.\\
        Then there is a unique functor
                $L\colon \Kl{UF}\longrightarrow\Cat{D}$
        (called the \emph{comparison functor})
        such that $U\circ L=G$
        and $L\circ V=F$
        (see Notation~\ref{nt:beck}).
        \begin{equation*}
        \xymatrix{
                \Kl{UF}
                \ar@/^1em/[rd]^G
                \ar@/^1em/[rr]^L
                \ar@{}[rd]|{\rotatebox[origin=c]{225}{$\vdash$}}
                &
                &
                \Cat{D}
                \ar@/^1em/^U[ld]
                \\
                &
                \Cat{C}
                \ar@/^1em/[ul]^V
                \ar@{}[ru]|{\rotatebox[origin=c]{135}{$\vdash$}}
                \ar@/^1em/[ru]^F
                &
        }
        \end{equation*}
\end{lem}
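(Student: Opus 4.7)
The plan is to define $L$ by transposing morphisms of $\Kl{UF}$ across the adjunction $F\dashv U$. Since $V\colon \Cat{C}\to\Kl{UF}$ is the identity on objects, the requirement $L\circ V = F$ forces $L(C) = FC$. On a morphism $f\colon C_1\to C_2$ of $\Kl{UF}$, i.e.\ on $f\colon C_1\to UFC_2$ in $\Cat{C}$, I would set $Lf \eqdf \varepsilon_{FC_2}\circ Ff$; this is exactly the $\Cat{D}$-morphism corresponding to $f$ under the hom-set bijection of $F\dashv U$, so that $U(Lf)\circ\eta_{C_1} = f$.

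Next I would verify that $L$ is a functor. Preservation of identities is a triangle identity: $L(\eta_C) = \varepsilon_{FC}\circ F\eta_C = \id{FC}$. For composition, I would unwind $L(g\afterKl f) = \varepsilon_{FC_3}\circ FU\varepsilon_{FC_3}\circ FUFg\circ Ff$ on one side and $Lg\circ Lf = \varepsilon_{FC_3}\circ Fg\circ\varepsilon_{FC_2}\circ Ff$ on the other. Applying naturality of $\varepsilon$ at $Fg\colon FC_2\to FUFC_3$ rewrites the second as $\varepsilon_{FC_3}\circ\varepsilon_{FUFC_3}\circ FUFg\circ Ff$, and applying naturality of $\varepsilon$ at $\varepsilon_{FC_3}\colon FUFC_3\to FC_3$ rewrites the first in the same form. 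The compatibilities then follow by direct computation: $U(Lf) = U\varepsilon_{FC_2}\circ UFf = \mu_{C_2}\circ UFf = Gf$, so $U\circ L = G$; and $L(Vh) = \varepsilon_{FC_2}\circ F(\eta_{C_2}\circ h) = (\varepsilon_{FC_2}\circ F\eta_{C_2})\circ Fh = Fh$ by the other triangle identity, so $L\circ V = F$.

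For uniqueness, suppose $L'$ also satisfies $UL' = G$ and $L'V = F$. On objects $L'(C) = L'(VC) = FC = L(C)$. For a morphism $f\colon C_1\to C_2$ of $\Kl{UF}$, the constraint $U(L'f) = Gf = \mu_{C_2}\circ UFf$ yields $U(L'f)\circ\eta_{C_1} = \mu_{C_2}\circ UFf\circ\eta_{C_1} = \mu_{C_2}\circ\eta_{UFC_2}\circ f = f$ by naturality of $\eta$ and the monad unit law. Hence $L'f$ is the transpose of $f$ across $F\dashv U$, which is uniquely determined, so $L'f = Lf$. The only mildly delicate step is keeping track of which naturality square of $\varepsilon$ to invoke when verifying functoriality on composites; everything else reduces to the triangle identities and the monad laws.
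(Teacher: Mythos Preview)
Your proposal is correct and follows essentially the same route as the paper: the same definition $Lf = \varepsilon_{FC_2}\circ Ff$, the same triangle-identity checks for identities and for $L\circ V = F$, the same computation $ULf = \mu_{C_2}\circ UFf = Gf$, and the same uniqueness argument via the universal property of the adjunction transpose. The only cosmetic difference is that in the functoriality-on-composition step the paper applies a single naturality square of $\varepsilon$ (at the composite $\varepsilon_{FC_3}\circ Fg$) to pass directly from one side to the other, whereas you rewrite both sides to the common form $\varepsilon_{FC_3}\circ\varepsilon_{FUFC_3}\circ FUFg\circ Ff$ using two separate naturality squares; these are equivalent manipulations.
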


\begin{dfn}
        \label{def:kleisli}
        Let $\Cat{C}$ and~$\Cat{D}$ be categories.
        \begin{enumerate}
                \item
                A functor $F\colon \Cat{C}\longrightarrow \Cat{D}$
                is called \emph{Kleislian}
                when it has a right adjoint~$U\colon \Cat{D}\to\Cat{C}$,
                and the functor~$L\colon \Kl{UF}\longrightarrow \Cat{D}$
                from Lemma~\ref{lem:L} is an isomorphism.

                \item
                        We say that $\Cat{D}$ \emph{is Kleislian over}~$\Cat{C}$
                        when there is a Kleislian
                        functor~$F\colon \Cat{C}\longrightarrow\Cat{D}$.
        \end{enumerate}
\end{dfn}

\begin{thm}
        \label{thm:kleisli-beck}
        Let $F\colon \Cat{C}\longrightarrow \Cat{D}$
        be a functor with a right adjoint~$U$.\\
        The following are equivalent.
        \begin{enumerate}
        \item
                \label{thm:kleisli-beck-i}
                $F$ is Kleislian (see Definition~\ref{def:kleisli}).
        \item
                \label{thm:kleisli-beck-ii}
                $F$ is bijective on objects
                (i.e. for every object~$D$ from~$\Cat{D}$
                there is a unique object~$C$ from~$\Cat{C}$
                such that~$FC=D$).
        \end{enumerate}
\end{thm}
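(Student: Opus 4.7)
The plan is to observe that the comparison functor~$L$ is automatically fully faithful for any adjunction, so the question of being an isomorphism reduces entirely to the question of being bijective on objects, which in turn coincides with~$F$ being bijective on objects.

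First I would note that, by Notation~\ref{nt:beck}, the functor~$V\colon\Cat{C}\to\Kl{UF}$ is the identity on objects. Since $L\circ V = F$, this forces $L$ and $F$ to agree on objects; in particular $L$ is bijective on objects if and only if $F$ is. The direction~\ref{thm:kleisli-beck-i}$\Rightarrow$\ref{thm:kleisli-beck-ii} is then immediate, since any isomorphism of categories is bijective on objects.

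For~\ref{thm:kleisli-beck-ii}$\Rightarrow$\ref{thm:kleisli-beck-i} the substantive step is to check that~$L$ is always fully faithful. Fix objects~$C_1, C_2$ of~$\Cat{C}$; a morphism in~$\Kl{UF}(C_1,C_2)$ is precisely a morphism $g\colon C_1 \to UFC_2$ in~$\Cat{C}$, and $L(g)$ is a morphism $FC_1\to FC_2$ in~$\Cat{D}$. Using $U\circ L = G$, naturality of~$\eta$ at $g$, and the triangle identity $\mu_{C_2}\circ\eta_{UFC_2} = \id{UFC_2}$, one computes
\begin{equation*}
UL(g)\circ\eta_{C_1} \;=\; \mu_{C_2}\circ UFg\circ\eta_{C_1} \;=\; \mu_{C_2}\circ\eta_{UFC_2}\circ g \;=\; g.
\end{equation*}
This identifies the action of~$L$ on the hom-set $\Kl{UF}(C_1,C_2)\to\Cat{D}(FC_1,FC_2)$ as the inverse of the adjunction bijection $h\mapsto Uh\circ\eta_{C_1}$; in particular it is a bijection, so~$L$ is fully faithful.

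Combining full faithfulness with the assumption that~$F$ is bijective on objects (which, as noted, makes $L$ bijective on objects too), one concludes that $L$ is an isomorphism of categories, giving~\ref{thm:kleisli-beck-i}. The only non-routine point is the short computation above showing that $L$ on hom-sets is exactly the adjunction bijection; everything else is unfolding definitions, so I do not anticipate any real obstacle.
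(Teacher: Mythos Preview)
Your argument is correct. The computation $UL(g)\circ\eta_{C_1}=g$ is exactly right, and it does identify the action of~$L$ on hom-sets with the inverse of the adjunction bijection $\Cat{D}(FC_1,FC_2)\to\Cat{C}(C_1,UFC_2)$, so~$L$ is fully faithful for any adjunction. Together with the observation that~$L$ and~$F$ have the same object map (via $L\circ V=F$ and $V$ being the identity on objects), this gives the equivalence cleanly.

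The paper organises the proof differently: rather than isolating full faithfulness of~$L$ as a standalone fact, it assumes~\ref{thm:kleisli-beck-ii} and directly constructs an inverse functor $K\colon\Cat{D}\to\Kl{UF}$ by setting $KD$ to be the unique~$C$ with $FC=D$ and $Kg=Ug\circ\eta_{KD_1}$, then verifies $LK=\id{}$ and $KL=\id{}$ by hand. The verification of $KL=\id{}$ is essentially your computation in disguise. Your route is a bit more conceptual --- it explains \emph{why} the only obstruction to~$L$ being an isomorphism is surjectivity on objects --- while the paper's route has the minor advantage of exhibiting the inverse explicitly. Either is perfectly acceptable here.
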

\begin{cor}
\label{C:kleisli}
The embedding $U^\mathrm{op}\colon (\CMIU)^\mathrm{op} \longrightarrow (\CPU)^\mathrm{op}$
is Kleislian
(see Def.~\ref{def:kleisli}).
\end{cor}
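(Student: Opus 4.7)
The plan is very short because the corollary is essentially an immediate application of the two main results already assembled in the paper. First I would invoke Theorem~\ref{thm:left-adjoint} to obtain the adjunction $F\dashv U$ with $F\colon \CPU\longrightarrow \CMIU$ left adjoint to the embedding $U\colon \CMIU\longrightarrow\CPU$. Passing to opposite categories reverses the direction of adjunctions, so the induced adjunction reads $U^\mathrm{op}\dashv F^\mathrm{op}$, where $U^\mathrm{op}\colon(\CMIU)^\mathrm{op}\longrightarrow(\CPU)^\mathrm{op}$ and $F^\mathrm{op}\colon(\CPU)^\mathrm{op}\longrightarrow(\CMIU)^\mathrm{op}$. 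In particular, the functor $U^\mathrm{op}$ has a right adjoint, which is the hypothesis needed to apply Theorem~\ref{thm:kleisli-beck}.

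Next I would check that $U^\mathrm{op}$ is bijective on objects. This is immediate from the setup of the paper: $\CMIU$ is defined as the (wide) subcategory of $\CPU$ consisting of MIU-maps, so $U$ is an inclusion that is the identity on the object class of $C^*$-algebras (with unit). Taking opposites does not change objects, so $U^\mathrm{op}$ is also the identity on objects, hence trivially bijective there. Applying Theorem~\ref{thm:kleisli-beck} to $U^\mathrm{op}$ (with its right adjoint $F^\mathrm{op}$) then yields that $U^\mathrm{op}$ is Kleislian, which is exactly the statement of the corollary. There is no real obstacle here; the only thing to be careful about is keeping track of which direction the adjunction runs after the $(-)^\mathrm{op}$-duality, so that Theorem~\ref{thm:kleisli-beck} is applied to the correct functor (namely $U^\mathrm{op}$, not $F^\mathrm{op}$).
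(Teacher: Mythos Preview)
Your proposal is correct and follows essentially the same argument as the paper: invoke Theorem~\ref{thm:left-adjoint} for the adjunction, dualize to see that $F^\mathrm{op}$ is right adjoint to $U^\mathrm{op}$, observe that $U^\mathrm{op}$ is bijective on objects because $\CMIU$ and $\CPU$ share the same objects, and conclude via Theorem~\ref{thm:kleisli-beck}. If anything, you are slightly more careful than the paper's own wording about which side of the adjunction is needed after passing to opposites.
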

\begin{proof}
By Theorem~\ref{thm:kleisli-beck}
we must show that~$U^\mathrm{op}$
has a left adjoint and is bijective on objects.
Since the embedding $U\colon \CMIU \ra \CPU$
has a \emph{left} adjoint~$F\colon \CPU\ra \CMIU$
it follows that $F^\mathrm{op}\colon (\CPU)^\mathrm{op}\ra(\CMIU)^\mathrm{op}$
is the \emph{right} adjoint of~$U^\mathrm{op}$.
Thus~$U^\mathrm{op}$ has a left adjoint.
Further, 
as $\CMIU$ and~$\CPU$ 
have the same objects,
$U$ is bijective on objects,
and so is~$U^\mathrm{op}$.
Hence $U^\mathrm{op}$ is Kleislian.
\end{proof}
In summary,
the embedding $U\colon \CMIU\longrightarrow \CPU$
has a left adjoint~$F$
(and so~$F^\mathrm{op}\colon (\CMIU)^\mathrm{op} \ra (\CPU)^\mathrm{op}$
is \emph{right} adjoint to~$U^\mathrm{op}$),
and 
the unique functor from the Kleisli category~$\Kl{FU}$ 
of the monad~$FU$ on~$(\CMIU)^\mathrm{op}$ to~$(\CPU)^\mathrm{op}$
that makes the two triangles in the diagram below on the left commute
is an isomorphism.
        \begin{equation*}
        \xymatrix{
                \Kl{FU}
                \ar@/^1em/[rd]
                \ar@/^1em/[rr]^{\cong}
                \ar@{}[rd]|{\rotatebox[origin=c]{225}{$\vdash$}}
                &
                &
                (\CPU)^\mathrm{op}
                \ar@/^1em/^{F^\mathrm{op}}[ld]
                & 
                \Kl{\mathcal{P}}
                \ar@/^1em/[rd]
                \ar@/^1em/[rr]^{\cong}
                \ar@{}[rd]|{\rotatebox[origin=c]{225}{$\vdash$}}
                &
                &
                \Cat{Set}_\mathrm{multi}
                \ar@/^1em/^{G}[ld]
                \\
                &
                (\CMIU)^\mathrm{op}
                \ar@/^1em/[ul]
                \ar@{}[ru]|{\rotatebox[origin=c]{135}{$\vdash$}}
                \ar@/^1em/[ru]^{U^\mathrm{op}}
                &
                &
                &
                \Cat{Set}
                \ar@/^1em/[ul]
                \ar@{}[ru]|{\rotatebox[origin=c]{135}{$\vdash$}}
                \ar@/^1em/[ru]^{V}
                &%
        }
        \end{equation*}
        For the category~$\Cat{Set}_\mathrm{multi}$ of
        multimaps between sets
        used in the introduction
        to describe the semantics of non-deterministic programs
        the situation is the same,
        see the diagram above to the right.

        (The functor~$V$ is the obvious embedding.
The right adjoint~$G$ of~$V$ sends a multimap~$f$ from~$X$ to~$Y$
to the function~$Gf\colon \mathcal{P}(X)\ra\mathcal{P}(Y)$
that assigns to a subset~$A\in \mathcal{P}(X)$
the image of~$A$ under~$f$.
Note that~$GV=\mathcal{P}$.)

\section{Discussion}
\label{S:discussion}
\subsection{Variations}
\begin{ex}[Subunital maps]
        Let $\CPsU$ be the category of $C^*$-algebras
        and the positive linear maps~$f$ between them
        that are \emph{subunitial},
        i.e.~$f(1)\leq 1$.
        The morphisms of~$\CPsU$ are
        called \emph{PsU-maps}.

        It is not hard to see that
        the products in~$\CPsU$ are the same as in~$\CMIU$,
        and that the equaliser in~$\CMIU$ of a pair~$f,g$ of MIU-maps
        is the equaliser of~$f,g$ in~$\CPsU$ as well.
        Thus the embedding $U\colon \CMIU\longrightarrow \CPsU$
        preserves limits.
        Using the same argument as in Theorem~\ref{thm:left-adjoint}
        but with ``PU-map'' replaced by ``PsU-map''
        one can show that $U$ satisfies the Solution Set Condition.
        Hence~$U$ has a left adjoint
        by Freyd's Adjoint Function Theorem, 
        say $F\colon \CPsU\longrightarrow \CMIU$.

        Since~$\CPsU$ has the same objects as~$\CMIU$
        (namely the $C^*$-algebras)
        the functor $U^\mathrm{op}\colon 
        (\CMIU)^\mathrm{op}\longrightarrow (\CPsU)^\mathrm{op}$
        is bijective on objects and thus Kleislian
        (by Th.~\ref{thm:kleisli-beck}).

        Hence $(\CPsU)^\mathrm{op}$ is Kleislian over~$(\CMIU)^\mathrm{op}$.
\end{ex}

\begin{ex}[Bounded linear maps]
        Let~$\CP$ be the category
        of positive bounded linear maps between $C^*$-algebras.
        We will show that~$(\CP)^\mathrm{op}$ is \emph{not}
        Kleislian over~$(\CMIU)^\mathrm{op}$.
        Indeed, if it were
        then~$(\CP)^\mathrm{op}$ would be cocomplete,
        but it is not: there is no $\omega$-fold product
        of~$\C$ in~$\CP$.
        To see this, suppose that there is
        a $\omega$-fold product~$\mathscr{P}$
        in~$\CP$ with projections $\pi_i\colon \mathscr{P}\ra \C$
        for~$i\in \omega$.
        Since~$\pi_i$ is a bounded linear map
        for~$i\in \omega$,
        it has finite operator norm, say~$\|\pi_i\|$.
        By symmetry, $\|\pi_i\| = \|\pi_j\|$ for all~$i,j\in \omega$.
        Write
        $K\eqdf \|\pi_0\|=\|\pi_1\| = \|\pi_2\|= \dotsb$.
        Define $f_i\colon \C\ra \C$ by~$f_i(z)=iz$
        for all~$z\in\C$ and~$i\in \omega$.
        Then~$f_i$ is a positive bounded linear map for each~$i\in\omega$.
        Since~$\mathscr{P}$ is the $\omega$-fold product of~$\C$,
        there is a (unique positive) bounded linear 
        map $f\colon \C\ra \mathscr{P}$
        such that $\pi_i \circ f = f_i$ for all~$i \in \omega$.
        For each~$N\in \omega$ we have 
        \begin{equation*}
                N\ =\ \|f_N(1)\| \ \leq\  \|f_N\| \ =\ \|\pi_N\circ f\|
        \ \leq\  \|\pi_N\| \,\|f\|
        = K \,\|f\|.
        \end{equation*}
        Thus $K \|f\|$ is greater than any number, which is absurd.
\end{ex}

\begin{ex}[Completely positive maps]
        For clarity's sake
        we recall what it means 
        for a linear map~$f$ between $C^*$-algebras
        to be completely positive (see~\cite{stinespring1955}).
        For this we need some notation.
        Given a $C^*$-algebra $\mathscr{A}$,
        and $n\in\N$
        let~$M_n(\mathscr{A})$ denote
        the set of $n\times n$-matrices
        with entries from~$\mathscr{A}$.
        We leave it to the reader to check
        that~$M_n(\mathscr{A})$
        is a $*$-algebra with the obvious
        operations.
        In fact, it turns out that $M_n(\mathscr{A})$ is a $C^*$-algebra,
        but some care must be taken to define the norm on~$M_n(\mathscr{A})$
        as we will see below.
        Now,
        a linear map $f\colon \mathscr{A}\longrightarrow\mathscr{B}$
        is called \emph{completely positive}
        when $M_n f$ is positive for each~$n\in \N$,
        where $M_n f\colon M_n(\mathscr{A})\longrightarrow
        M_n(\mathscr{B})$ is the
        map obtained by applying~$f$ to each entry
        of a matrix in~$M_n(\mathscr{A})$.
        Of course,
        ``$M_n f$ is positive''
        only makes sense once we 
        know that $M_n(\mathscr{A})$ and~$M_n(\mathscr{B})$
        are $C^*$-algebras.

        Let~$\mathscr{A}$ be a $C^*$-algebra.
        We will put a $C^*$-norm on~$M_n(\mathscr{A})$.
        Let~$\mathscr{H}$
        be a Hilbert space
        and let
        $\pi\colon \mathscr{A}\longrightarrow \mathscr{B}(\mathscr{H})$,
        be an isometric MIU-map.
        We get a norm~$\|-\|_\pi$ on~$M_n(\mathscr{A})$
        given by for~$A\in M_n(\mathscr{A})$,
        \begin{equation*}
                \| A\|_\pi \ =\  \|\xi(\,(M_n\pi)(A)\,)\|,
        \end{equation*}
        where $\xi(\,(M_n \pi)(A)\,)\colon \mathscr{H}^{\oplus n} \rightarrow
        \mathscr{H}^{\oplus n}$
        is the bounded linear map represented by
        the matrix~$(M_n\pi)(A)$,
        and $\|\xi(\,(M_n\pi)(A)\,)\|$ is the operator
        norm of~$\xi(\,(M_n\pi)(A)\,)$ in~$\mathscr{B}(\mathscr{H}^{\oplus n})$.

        It is easy to see that~$\|-\|_\pi$
        satisfies the $C^*$-identity, $\|A^*A\|_\pi =\|A\|^2_\pi$
        for all~$A\in M_n(\mathscr{A})$.
        It is less obvious that~$M_n(\mathscr{A})$
        is complete with respect to~$\|-\|_\pi$.
        To see this,
        first note that~$\|A_{ij}\| \leq \|A\|_\pi$
        for all~$i,j$.
        So given a Cauchy sequence $A_1,\,A_2,\,\dotsc$ in~$M_n(\mathscr{A})$
        we can form the entrywise limit~$A$, that is, 
        $A_{ij} = \lim_{m\ra \infty} A_{ij}$.  
        We leave it to the reader to check that~$A_{ij}$ is the limit
        of~$A_1,\,A_2,\,\dotsc$,
        and thus $M_n(\mathscr{A})$ is complete with respect to~$\|-\|_\pi$.
        Hence $M_n(\mathscr{A})$ is a $C^*$-algebra
        with norm~$\|-\|_\pi$.

        The $C^*$-norm~$\|-\|_\pi$
        does not depend on~$\pi$.
        Indeed, let $\mathscr{H}_1$ and~$\mathscr{H}_2$
        be Hilbert spaces
        and let 
        $\pi_1\colon \mathscr{A}\longrightarrow\mathscr{B}(\mathscr{H}_1)$
        and
        $\pi_2\colon \mathscr{A}\longrightarrow\mathscr{B}(\mathscr{H}_2)$
        be isometric MIU-maps;
        we will show that~$\|-\|_{\pi_1} = \|-\|_{\pi_2}$.
        Recall that the norm~$\|-\|_{\pi_i}$ induces
        an order~$\leq_{\pi_i}$ on~$M_n(\mathscr{A})$
        given by $0\leq_{\pi_i} A$ iff
        $\|A-\|A\|_{\pi_i}\|_{\pi_i} \leq \|A\|_{\pi_i}$
        where $A\in M_n(\mathscr{A})$.
        Since $\|A\|_{\pi_i}^2 = \inf\{ \ \lambda\in[0,\infty)\colon\ 
                A^*A  \leq_{\pi_i} \lambda \ \}$
                for all~$A\in M_n(\mathscr{A})$,
        to prove $\|-\|_{\pi_1} = \|-\|_{\pi_2}$
        it suffices to show that the orders
        $\leq_{\pi_1}$ and~$\leq_{\pi_2}$ coincide.
        But this is easy when one recalls
        that~$A\in M_n(\mathscr{A})$ is positive
        iff~$A$ is of the form~$B^*B$ for some~$B\in M_n(\mathscr{A})$.

        The completely positive linear maps that
        preserve the unit are called \emph{CPU-maps}.
        Let~$\CcPU$ be the category of CPU-maps
        between $C^*$-algebras.
        Since $M_n(f)$
        is a MIU-map when~$f$ is a MIU-map
        and a MIU-map is positive,
        we see that any MIU-map is completely positive.
        Thus~$\CMIU$ is a subcategory of~$\CcPU$.
        We claim that~$(\CcPU)^\mathrm{op}$
        is Kleislian over~$(\CMIU)^\mathrm{op}$.
        
        Let us show that~$U$ preserves limits.
        To show that~$U$ preserves equalisers,
        let $f,g\colon \mathscr{A} \longrightarrow \mathscr{B}$
        be MIU-maps.
        Then~$\mathscr{E}\eqdf \{x\in \mathscr{A}\colon f(x)=g(x)\}$
        is a $C^*$-subalgebra of~$\mathscr{A}$
        and the embedding $e\colon \mathscr{E}\ra \mathscr{A}$
        is an isometric MIU-map.
        Then~$e$ is the equalisers of~$f,g$ in~$\CMIU$;
        we will show that~$e$ is the equaliser of~$f,g$ in~$\CcPU$.
        Let $\mathscr{C}$ be a~$C^*$-algebra,
        and let~$c\colon \mathscr{C}\ra \mathscr{A}$
        be a CPU-map such that~$f\circ c = g\circ c$
        Let~$d\colon \mathscr{C}\ra \mathscr{E}$
        be the restriction of~$c$.
        It turns out we must prove that~$d$ is completely positive.
        Let~$n\in\N$ be given.
        We must show that~$M_n d \colon M_n \mathscr{C}\ra M_n\mathscr{E}$
        is positive.
        Note that~$M_n e$ is an injective MIU-map
        and thus an isometry.
        So in order to prove that $M_nd$ is positive
        it suffices to show that $M_ne\circ M_nd = M_n(e\circ d)=M_n c$
        is positive, which it is since~$c$ is completely positive.
        Thus~$e$ is the equaliser of~$f,g$ in~$\CcPU$.
        Hence~$U$ preservers equalisers.

        To show that~$U$ preserves products,
        let~$I$ be a set and for each~$i\in I$
        let~$\mathscr{A}_i$ be a $C^*$-algebra.
        We will show that~$\bigoplus_{i\in I} \mathscr{A}_i$
        is the product of the~$\mathscr{A}_i$ in~$\CcPU$.
        Let~$\mathscr{C}$ be a $C^*$-algebra,
        and for each~$i\in I$,
        let $f_i\colon \mathscr{C}\ra \mathscr{A}_i$
        be a CPU-map.
        As before, let $f\colon \mathscr{C}\ra \bigoplus_{i\in I}{A}_i$
        be the map given by~$f(x)(i) = f_i(x)$ for all~$i\in I$ 
        and~$x\in \mathscr{C}$.
        Leaving the details to the reader
        it turns out that it suffices
        to show that~$f$ is completely positive.
        Let~$n\in \N$ be given.
        We must prove 
        that~$M_n f\colon M_n(\mathscr{C})\longrightarrow 
        M_n(\bigoplus_{i\in I}\mathscr{A}_i)$ is positive.
        Let~$\varphi\colon M_n(\bigoplus_{i\in I} \mathscr{A}_i)
        \longrightarrow \bigoplus_{i\in I} M_n(\mathscr{A}_i)$
        be the unique MIU-map such that
        $\pi_i\circ \varphi = M_n\pi_i$ for all~$i\in I$.
        Then~$\varphi$ is a MIU-isomorphism
        and thus to prove that~$M_nf$ is positive,
        it suffices to show that~$\varphi\circ M_nf$ is positive.
        Let $i\in I$ be given.
        We must prove that $\pi_i\circ \varphi\circ M_nf$ is positive.
        But we have $\pi_i\circ \varphi\circ M_nf =  M_n\pi_i \circ M_nf
        = M_n(\pi_i\circ f) = M_nf_i$,
        which is positive since $f$ is completely positive.
        Thus $\bigoplus_{i\in I}\mathscr{A}_i$
        is the product of the~$\mathscr{A}_i$ in~$\CcPU$
        and hence~$U$ preserves limits.

        With the same argument as in Theorem~\ref{thm:kleisli-beck}
        the functor~$U$ satisfies the Solution Set Condition
        and thus~$U$ has a left adjoint.
        It follows that~$U^\mathrm{op}\colon (\CMIU)^\mathrm{op}
        \longrightarrow (\CcPU)^\mathrm{op}$
        is Kleislian.
\end{ex}

\begin{ex}[$W^*$-algebras]
        Let $\WNMIU$ be the category
        of von Neumann algebras
	(also called $W^*$-algebras)
        and the MIU-maps between them that are normal,
        i.e., preserve suprema of upwards directed
        sets of self-adjoint elements.
        Let $\WNPU$ be the category
        of von Neumann
        and normal PU-maps.
        Note that~$\WNMIU$ is a subcategory of~$\WNPU$.
        We will prove that~$(\WNPU)^\mathrm{op}$
        is Kleislian over~$(\WNMIU)^\mathrm{op}$.

        It suffices to show that~$U$ has a left adjoint.
        Again we follow the lines of the 
        proof of Theorem~\ref{thm:left-adjoint}.
        Products and equalisers in~$\WNMIU$
        are the same as in~$\CMIU$.
        It is not hard to see that
        the embedding $U\colon \WNMIU \longrightarrow \WNPU$
        preserves limits.
        To see that~$U$ satisfies the Solution Set Condition
        we use the same method as before:
        given a von Neumann algebra~$\mathscr{A}$, 
        find a suitable cardinal~$\kappa$
        such that
        the following is a solution set.
        \begin{alignat*}{3}
                I \ \eqdf\ 
                \{\ (\mathscr{C},c)\colon\ 
                        \text{$\mathscr{C}$ is a }&\text{von Neumann algebra
                on a subset of~$\kappa$ }\\
                &\text{and $c\colon \mathscr{A}\longrightarrow
                \mathscr{C}$
        is a normal PU-map} \ \},
        \end{alignat*}
        Only this time we
        take~$\kappa = \# (\,\wp(\wp(\mathscr{A}))\,)$
        instead of~$\kappa = \#(\, \mathscr{A}^\N\,)$.
        We leave the details to the reader,
        but it follows from the fact
        that given a subset~$X$ of a von Neumann algebra~$\mathscr{B}$
        the smallest von Neumann subalgebra~$\mathscr{B}'$ that contains~$X$
        has cardinality at most~$\# (\,\wp(\wp(X))\,)$.
        Indeed, if~$\mathscr{H}$ is a Hilbert space such that
        $\mathscr{B}\subseteq \mathscr{B}(\mathscr{H})$ (perhaps
        after renaming the elements of~$\mathscr{B}$),
        then~$\mathscr{B}'$
        is the closure (in the weak operator topology 
        on~$\mathscr{B}(\mathscr{H})$)
        of the smallest~$*$-subalgebra containing~$X$.
        Thus any element of~$\mathscr{B}'$
        is the limit of a filter --- a special type of net, 
        see paragraph~12 of~\cite{willard2004} ---
        of $*$-algebra terms over~$X$,
        of which there are no more than~$\#(\, \wp(\wp(X))\,)$.

	By a similar reasoning one sees
	that the opposite~$(\mathbf{W}^*_\mathrm{NCPsU})^\mathrm{op}$
	of the category
	of normal completely positive subunital linear maps
	between von Neumann algebras
	is Kleislian over~$(\WNMIU)^\mathrm{op}$.
	The existence of the adjoint
	to the inclusion~$\WNMIU\to \mathbf{W}^*_\mathrm{NCPsU}$
	is key
	in our
	construction
	of a model of Selinger and Valiron's quantum lambda calculus
	by von Neumann algebras, see~\cite{CW2016}.
\end{ex}

\subsection{Concrete description}
In this note we have 
shown that the embedding $U\colon \CMIU\longrightarrow \CPU$
has a left adjoint~$F$,
but we miss a concrete description of~$F\mathscr{A}$
for all but the simplest $C^*$-algebras~$\mathscr{A}$.
What constitutes a ``concrete description'' is perhaps 
a matter of taste or occasion,
but let us 
pose that it should at least enable us to describe 
the Eilenberg--Moore category~$\EM{FU}$ of the comonad~$FU$.
More concretely, it should settle the following problem.
\begin{prob}
        Writing 
        $\Cat{BOUS}$
        for the category of positive linear maps that preserve
        the unit between Banach order unit spaces,
        determine whether
        $\EM{FU} \cong \Cat{BOUS}$.

        (An \emph{order unit space} is an ordered vector space~$V$ over~$\R$
        with an element~$1$, the \emph{order unit},
        such that for all~$v\in V$ there is $\lambda\in[0,\infty)$
        such that~$-\lambda \cdot 1 \leq v \leq \lambda\cdot 1$.
        The smallest such~$\lambda$ is denoted by~$\|v\|$.
        See~\cite{kadison1951} for more details.
        If $v\mapsto \|v\|$ gives a complete norm,
        $V$ is called a \emph{Banach order unit space}.)
\end{prob}

\subsection{MIU versus PU}
A second ``problem'' is to give a physical description
(if there is any)
of what it means for a quantum program's semantics
to be a MIU-map (and not just a PU-map).
A step in this direction
might be to define for a $C^*$-algebra~$\mathscr{A}$,
a PU-map $\varphi\colon \mathscr{A}\ra \C$,
and $a,b\in \mathscr{A}$ the quantity
\begin{equation*}
        \mathrm{Cov}_\varphi(a,b)\ \eqdf\ \varphi(a^*b)
        \,-\,\varphi(a)^*\varphi(b)
\end{equation*}
and interpret it as the covariance between the observables~$a$ and~$b$
in state~$\varphi$ of the quantum system~$\mathscr{A}$.
Let $T\colon \mathscr{A}\longrightarrow \mathscr{B}$
be a PU-map between $C^*$-algebras
(so perhaps~$T$ is the semantics of a quantum program).
Then it is not hard to verify that~$T$ is a MIU-map
if and only if~$T$ preserves covariance, that is, 
\begin{equation*}
        \mathrm{Cov}_\varphi(\,Ta,\,Tb\,)
        \ = \ \mathrm{Cov}_{\varphi\circ T}(a,b)
	\qquad
	\text{for all~$a,b\in \mathscr{A}$}.
\end{equation*}
\section{Acknowledgements}
Example~\ref{ex:C2}
and Example~\ref{ex:C3}
were suggested by Robert Furber.
I'm grateful that Jianchao Wu and Sander Uijlen spotted several errors in a
previous version of this text.
Kenta Cho realised that the results
of this paper
might be used
to construct a model of the quantum lambda calculus.
I thank them, and
Bart Jacobs,
Sam Staton,
Wim Veldman, 
and Bas Westerbaan
for their help.

Funding was received from the
European Research Council under grant agreement \textnumero~320571.
\bibliographystyle{eptcs}
\bibliography{main}

\appendix
\section{Additional Proofs}
\begin{proof}[Proof of Lemma~\ref{lem:L}]
        Define $LC \eqdf FC$
        for all objects~$C$ of~$\Kl{UF}$
        and  
        \begin{equation*}
                Lf \ \eqdf\  \varepsilon_{FC_2}\circ Ff
        \end{equation*}
        for $f\colon C_1 \longrightarrow UFC_2$
        from~$\Cat{C}$.
        \shortlong{We leave it to the reader
        to check that
        this gives a functor~$L\colon \Kl{UF}\longrightarrow \Cat{D}$
        such that $U\circ L=G$ and~$L\circ V=F$;
        we will prove that~$L$ is unique as such.}{%
        We claim this gives a functor~$L\colon \Kl{UF}\longrightarrow \Cat{D}$.}

        \shortlong{}{ 
        \emph{($L$ preserves the identity)} \ 
        Let~$C$ be an object of~$\Kl{UF}$,
        that is, an object of~$\Cat{C}$.
        Then the identity on~$C$ in~$\Kl{UF}$ is~$\eta_{C}$.
        We have $L(\eta_C) = \varepsilon_{FC}\circ F\eta_C = \id{FC}$.

        \emph{($L$ preserves composition)} \ 
        Let $f\colon C_1\longrightarrow UFC_2$
        and $g\colon C_2\longrightarrow UFC_3$
        from~$\Cat{C}$ be given.
        We must prove that~$L(g\afterKl f) = Lg \circ Lf$.
        We have:
        \begin{alignat*}{3}
                L(g\afterKl f) 
                \ &= \ 
                L(\mu_{C_3} \circ UFg \circ f)
                &&\qquad\text{by def.~of~$g\afterKl f$}\\
                \ &=\ 
                \varepsilon_{FC_3} \circ F\mu_{C_3} \circ FUFg \circ Ff
                &&\qquad\text{by def.~of~$L$}\\
                \ &=\ 
                \varepsilon_{FC_3} \circ FU\varepsilon_{FC_3} 
                \circ FUFg \circ Ff
                &&\qquad\text{by def.~of~$\mu_{C_3}$}\\
                \ &=\ 
                \varepsilon_{FC_3} 
                \circ Fg \circ \varepsilon_{FC_2}\circ Ff
                &&\qquad\text{by nat.~of~$\eta$}\\
                \ &=\ 
                Lg \circ Lf
                &&\qquad\text{by def.~of~$L$}
        \end{alignat*}
        Hence~$L$ is a functor from~$\Kl{UF}$ to~$\Cat{D}$.

        Let us prove that $U\circ L =G$.
        For $f\colon C_1 \longrightarrow UFC_2$
        from~$\Cat{C}$
        we have
        \begin{alignat*}{3}
                ULf
                \ &=\ 
                U(\varepsilon_{FC_2}\circ Ff)
                \qquad&&
                \text{by def.~of~$L$}\\
                \ &=\ 
                U\varepsilon_{FC_2}\circ UFf
                \qquad&&
                \text{}\\
                \ &=\ 
                \mu_{C_2}\circ UFf
                \qquad&&
                \text{by def.~of~$\mu_{C_2}$}\\
                \ &=\ 
                Gf
                \qquad&&
                \text{by def.~of~$Gf$}.
        \end{alignat*}

        Let us prove that $L\circ V = F$.
        For $f\colon C_1\longrightarrow C_2$
        from~$\Cat{C}$ be given,
        we have
        \begin{alignat*}{3}
                LVf
                \ &=\ 
                L(\eta_{C_2}\circ f)
                \qquad&&
                \text{by def.~of~$V$}\\
                \ &=\ 
                \varepsilon_{FC_2} \circ F \eta_{C_2}\circ Ff
                \qquad&&
                \text{by def.~of~$L$}\\
                \ &=\ 
                Ff
                \qquad&&
                \text{by counit--unit eq.}
        \end{alignat*}

        We have proven that there is a functor~$L\colon \Kl{UF}\ra \Cat{D}$
        such that $U\circ L=G$ and $L\circ V = F$.
        We must still prove that it is as such unique.%
        }
        
        Let~$L'\colon \Kl{UF} \ra \Cat{D}$
        be a functor such that~$U\circ L'=G$
        and $L'\circ V = F$.
        We must show that~$L=L'$.
        Let us first prove that~$L'$ and~$L$ agree on objects.
        Let $C$  be an object of~$\Kl{UF}$,
        i.e., $C$ is an object of~$\Cat{C}$.
        Since $L'\circ V = F$
        and $VC=C$ we have $L'C=L'VC = FC = LC$.
        Now, let $f\colon C_1 \rightarrow UFC_2$
        from~$\Cat{C}$ be given
        (so $f$ is a morphism in~$\Kl{UF}$
        from~$C_1$ to~$C_2$).
        We must show that~$L'f = LU\equiv \varepsilon_{FC_2}\circ Ff$.
        Note that since~$F$ is the left adjoint of~$U$
        there is a unique 
        morphism~$\overline{f}\colon FC_1\longrightarrow FC_2$
        in~$\Cat{D}$
        such that~$U\overline{f}\circ \eta_{C_1}=f$.
        To prove that $L'f=Lf$,
        we show that both~$Lf$ and~$L'f$ have this property.
        We have
        \begin{alignat*}{3}
                UL'f \circ \eta_{C_1}
                \ &=\ 
                Gf \circ \eta_{C_1} &&
                \text{as $U\circ L'=G$ by assump.}\\
                \ &=\ 
                \mu_{C_2}\circ UF f \circ \eta_{C_1} \qquad&&
                \text{by def.~of~$G$}\\
                \ &=\ 
                \mu_{C_2}\circ \eta_{UFC_2} \circ f  &&
                \text{by nat.~of~$\eta$}\\
                \ &=\ 
                f  &&
                \text{as~$UF$ is a monad.}
        \end{alignat*}
        By a similar argument
        we get $ULf\circ \eta_{C_1} = f$.
        Hence $Lf=L'f$.
\end{proof}

\begin{proof}[Proof of Theorem~\ref{thm:kleisli-beck}]
        We use the symbols from Notation~\ref{nt:beck}.
        
        \ref{thm:kleisli-beck-i}$\Longrightarrow$
        \ref{thm:kleisli-beck-ii}\ 
        Suppose that~$L$ is an isomorphism.
        We must prove that~$F$ is bijective on objects.
        Note that~$F=L\circ V$,
        so it suffices to show that both~$L$ and~$V$
        are bijective on objects.
        Clearly,~$L$ is bijective on objects
        as~$L$ is an isomorphism,
        and~$V\colon \Cat{C} \longrightarrow \Kl{UF}$
        is bijective on objects
        since the objects of~$\Kl{UF}$ are those of~$\Cat{C}$
        and $VC=C$ for all~$C$ from~$\Cat{C}$.

        \ref{thm:kleisli-beck-ii}$\Longrightarrow$
        \ref{thm:kleisli-beck-i}\ 
        Suppose that~\ref{thm:kleisli-beck-ii} holds.
        We prove that~$L$ is an isomorphism
        by giving its inverse.
        Let~$D$ be an object from~$\Cat{D}$.
        Note that since~$F$ is bijective on objects
        there is a unique object~$C$ from~$\Cat{C}$ such 
        that~$FD=C$. Define $KC \eqdf D$.

        Let $g\colon D_1 \ra D_2$ from~$\Cat{D}$
        be given.  Note that
        by definition of~$K$ we have:
        \begin{equation*}
                \xymatrix{
                        KD_1
                        \ar[rr]^{\eta_{KD_1}}
                        &&
                        UFKD_1
                        \ar@{=}[r]
                        &
                        UD_1
                        \ar[rr]^{Ug}
                        &&
                        UD_2
                        \ar@{=}[r]
                        &
                        UFKD_2
                }
        \end{equation*}
        Now, define
        $Kg\colon KD_1 \ra UFKD_2$ in~$\Cat{D}$ by
        $Kg \ \eqdf\ Ug\circ \eta_{KD_1}$.

        \shortlong{We leave it to the reader
                to check that this gives
                a functor $K\colon \Cat{D}\longrightarrow \Kl{UF}$;
        we will show that~$K$ is the inverse of~$L$.}{
        We claim that this 
        gives a functor $K\colon \Cat{D}\longrightarrow \Kl{UF}$.

        \emph{($K$ preserves the identity)}\ 
        For an object~$D$ of~$\Cat{D}$
        we have 
        \begin{equation*}
                K\id{D} \ =\  U\id{D}\circ  \eta_{KD} \ =\  \eta_{KD},
        \end{equation*}
        and~$\eta_{KD}$ is the identity on~$KD$ in~$\Kl{UF}$.

        \emph{($K$ preserves composition)}\ 
        Let $f\colon D_1 \longrightarrow D_2$
        and $g\colon D_2 \longrightarrow D_3$
        from~$\Cat{D}$ be given.
        We must prove that~$K(g\circ f) = K(g)\afterKl K(f)$.
        We have
        \begin{alignat*}{3}
                K(g)\afterKl K(f)
                \ &=\ 
                \mu_{KD_3} \circ UFKg \circ Kf
                &&\qquad
                \text{by def.~of~$\afterKl$}\\
                \ &=\ 
                \mu_{KD_3} \circ UFUg\circ UF\eta_{KD_2} \circ Uf
                \circ \eta_{KD_1}
                &&\qquad
                \text{by def.~of~$K$}\\
                \ &=\ 
                U\varepsilon_{D_3} \circ UFUg\circ UF\eta_{KD_2} \circ Uf
                \circ \eta_{KD_1}
                &&\qquad
                \text{by def.~of~$\mu$}\\
                \ &=\ 
                Ug\circ U\varepsilon_{D_2} \circ UF\eta_{KD_2} \circ Uf
                \circ \eta_{KD_1}
                &&\qquad
                \text{by nat.~of~$\varepsilon$}\\
                \ &=\ 
                Ug \circ Uf
                \circ \eta_{KD_1}
                &&\qquad
                \text{by counit--unit eq.}\\
                \ &=\ 
                K(g \circ f)
                &&\qquad
                \text{by def of~$K$.}
        \end{alignat*}

        Hence~$K$ is a functor from~$\Cat{D}$ to~$\Kl{UF}$.
        We will show that~$K$ is the inverse of~$L$.%
        }
        For this we must prove that $K\circ L = \id{\Cat{D}}$
        and $L\circ K = \id{\Kl{UF}}$.

        For a morphism $g\colon D_1 \longrightarrow D_2$
        from~$\Cat{D}$, we have
        \begin{alignat*}{3}
                LKg
                \ &=\ 
                L(Ug\circ \eta_{KD_1}) 
                &&\qquad\text{by def.~of~$K$}\\
                \ &=\ 
                \varepsilon_{FKD_2}\circ FUg\circ F\eta_{KD_1} 
                &&\qquad\text{by def.~of~$L$}\\
                \ &=\ 
                g\circ \varepsilon_{FKD_1}\circ F\eta_{KD_1} 
                &&\qquad\text{by nat.~of~$\varepsilon$}\\
                \ &=\ 
                g
                &&\qquad\text{by counit--unit eq.}
        \end{alignat*}

        For a morphism $f\colon C_1 \longrightarrow UFC_2$
        in~$\Cat{C}$ we have
        \begin{alignat*}{3}
                KLf
                \ &=\ 
                K(\varepsilon_{FC_2}\circ Ff)
                &&\qquad\text{by def.~of~$L$}\\
                KLfdd
                \ &=\ 
                U\varepsilon_{FC_2}\circ UFf \circ  \eta_{KFC_1}
                &&\qquad\text{by def.~of~$K$}\\
                \ &=\ 
                U\varepsilon_{FC_2}\circ \eta_{UFC_2}\circ f 
                &&\qquad\text{by nat.~of~$\eta$}\\
                \ &=\ 
                f 
                &&\qquad\text{by counit--unit~eq.}
        \end{alignat*}
        Hence~$K$ is the inverse of~$L$,
        so~$L$ is an isomorphism.
\end{proof}
 }
\end{document}